\newtheorem{theorem}{Theorem}[section]
\newtheorem{prop}[theorem]{Proposition}
\newtheorem{fact}[theorem]{Fact}
\newtheorem{lemma}[theorem]{Lemma}
\theoremstyle{definition}
\theoremstyle{remark}
\ProvideTextCommandDefault{\cprime}{(U+042C)}
\newcommand{\st}{\operatorname{st}}
\newcommand{\cl}{\operatorname{Cl}}
\newcommand{\Sh}[1]{\ensuremath{\mathscr{#1}^{\mathrm{Sh}}}}
\newcommand{\nip}{\mathrm{NIP}}
\newcommand{\Cal}[1]{\ensuremath{\mathcal{#1}}}
\newcommand{\Sa}[1]{\ensuremath{\mathscr{#1}}}
\newcommand{\R}{\mathbb{R}}
\begin{document}
\title[]{An nip structure which does not interpret an infinite group but whose Shelah expansion interprets an infinite field}

\author{Erik Walsberg}
\address{Department of Mathematics, Statistics, and Computer Science\\
Department of Mathematics\\University of California, Irvine, 340 Rowland Hall (Bldg.\# 400),
Irvine, CA 92697-3875}
\email{ewalsber@uci.edu}
\urladdr{http://www.math.illinois.edu/\textasciitilde erikw}

\date{\today}

\maketitle

\begin{abstract}
We describe one.
\end{abstract}
\section{Introduction}
\noindent All structures are first order and ``definable" means ``first order definable, possibly with parameters".
Let $\Sa M$ be a structure.
The structure induced on $A \subseteq M^m$ by $\Sa M$ is the structure with domain $A$ whose primitive $n$-ary relations are all sets of the form $X \cap A^n$ for $\Sa M$-definable $X \subseteq M^{nm}$.
Let $\Sa N$ be a highly saturated elementary extension of $\Sa M$.
The \textbf{Shelah expansion} $\Sh M$ of $\Sa M$ is the structure induced on $M$ by $\Sa N$.
A subset of $M^n$ is \textbf{externally definable} if it is of the form $X \cap M^n$ for some $\Sa N$-definable $X \subseteq N^n$.
Saturation shows that the collection of externally definable sets does not depend on choice of $\Sa N$, so $\Sh M$ essentially does not depend on choice of $\Sa N$.
Fact~\ref{fact:shelah} is due to Shelah~\cite{Shelah-external}, see also Chernikov and Simon~\cite{CS-I}.

\begin{fact}
\label{fact:shelah}
Suppose $\Sa M$ is $\nip$.
Then every $\Sh M$-definable set is externally definable.
\end{fact}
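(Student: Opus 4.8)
The plan is to show that the externally definable subsets of $M^n$ (for all $n$), which are by definition exactly the primitive relations of $\Sh{M}$, are closed under the first-order operations, and then to conclude by induction on the complexity of an $\Sh{M}$-formula. First I would record the easy closure properties: since intersection with $M^n$ commutes with Boolean operations and with fixing coordinates to elements of $M$, the externally definable sets form a Boolean algebra, contain all $\Sa{M}$-definable sets, and are closed under substituting parameters from $M$ and under identifying and permuting variables. Hence every quantifier-free $\Sh{M}$-formula, even with parameters from $M$, defines an externally definable set. The entire content of the statement therefore reduces to a single closure property: the image of an externally definable set $Y \sub M^{n+1}$ under the coordinate projection $M^{n+1}\to M^n$ is again externally definable.

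This projection step is where the hypothesis $\nip$ is essential. Write $Y = X \cap M^{n+1}$ with $X \sub N^{n+1}$ an $\Sa{N}$-definable set, say $X = \phi(N^{n+1};c)$ for some $c \in N$. The naive candidate is $\{a \in N^n : \Sa{N} \models \exists y\, \phi(a,y;c)\}\cap M^n$, which is externally definable, but it computes $\{a \in M^n : \exists b \in N,\ (a,b)\in X\}$, whereas the $\Sh{M}$-existential quantifier ranges only over $M$ and so asks for $\{a \in M^n : \exists b \in M,\ (a,b)\in X\}$. The former may strictly contain the latter, since a fiber $X_a$ can meet $N$ without meeting $M$. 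Reconciling an ``external'' witness $b \in N$ with an ``internal'' witness $b \in M$ is precisely the difficulty, and it genuinely fails without a tameness hypothesis, so $\nip$ must enter at exactly this point.

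The tool I would use is the existence of honest definitions in $\nip$ theories (Chernikov--Simon~\cite{CS-I}). For the formula $\phi$, honest definitions supply a further formula $\theta$ so that, relative to the small model $M$, each relevant fiber of $X$ admits an externally definable approximation $\theta(N;d)$ that agrees with the fiber on $M$ (the trace condition) while being contained in the corresponding fiber over $N$ (the honesty condition). The plan is to combine the exact trace on $M$ with the honesty containment and the high saturation of $\Sa{N}$ in order to carve the overshooting naive projection down to exactly the image. I expect the main obstacle to be twofold and to constitute the real combinatorial heart of the argument: first, obtaining a \emph{single} honest definition valid uniformly across the whole family of fibers $\{X_a : a \in M^n\}$ rather than a separate parameter $d$ for each $a$; and second, using that uniform honest definition to translate the non-first-order condition ``$X_a$ has a witness in $M$'' into an externally definable condition on $a$, so that the quantifier over $M$ is captured by a trace of an $\Sa{N}$-definable set. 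Once the projection step is established, the induction over $\Sh{M}$-formulas closes immediately and every $\Sh{M}$-definable set is externally definable.
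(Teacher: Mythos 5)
The paper does not prove this statement: it is quoted as a known Fact, with the proof deferred to Shelah and to Chernikov--Simon \cite{CS-I}. So your proposal has to be judged against the Chernikov--Simon argument it is implicitly reconstructing. Your reduction is correct and complete: externally definable sets form a Boolean algebra containing the $\Sa M$-definable sets and closed under substitution of parameters from $M$, so the whole statement reduces to closure under projection $M^{n+1} \to M^n$, and you are right that this is exactly where $\nip$ must enter (the closure fails without it). You also name the correct tool, honest definitions.

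The gap is that you stop at the heart of the matter: you list ``translate `$X_a$ has a witness in $M$' into an externally definable condition'' as an \emph{expected obstacle} rather than resolving it, and your setup of honest definitions is not the one that resolves it. Two corrections are needed. First, the honest definition is applied once to the full $(n+1)$-ary trace $Y = \psi(M^{n+1};c)$, not fiber-by-fiber, so your first ``obstacle'' (uniformity across the fibers $X_a$) never arises; uniform honest definitions are a genuinely harder theorem and are not needed here. Second, the honest definition produces a formula $\theta(x,y;z)$ and a parameter $d$ lying in $M'$, where $(N',M')$ is an $|M|^{+}$-saturated elementary extension of the \emph{pair} $(N,M)$, satisfying the trace condition $\theta(M^{n+1};d) = Y$ and the honesty condition $\theta((M')^{n+1};d) \sub \psi((M')^{n+1};c)$ --- the containment is over $M'$, not over $N$ as you wrote. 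Now the projection of $Y$ equals the trace on $M^n$ of the $M'$-definable set $\exists y\, \theta(x,y;d)$: one inclusion is trivial, and for the other, if $a \in M^n$ has a $\theta$-witness $b' \in M'$, then honesty puts $(a,b')$ in $\psi$ with $b'$ in the predicate $M'$, and elementarity of the pair $(N,M) \prec (N',M')$ pulls the statement ``$\psi(a,y;c)$ has a witness in the predicate'' down to $(N,M)$, producing a witness $b \in M$ with $(a,b) \in Y$. Since $d$ lies in $M'$ and $M \prec M'$ as structures in the original language, this trace is externally definable. Without this mechanism --- parameter in $M'$, honesty over $M'$, witness descent by pair elementarity --- your proposal is a correct plan but not a proof.
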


\noindent Fact~\ref{fact:shelah} implies that $\Sh M$ is $\nip$ when $\Sa M$ is $\nip$.
More generally and informally, it shows that $\Sh M$ has the same combinatorial properties as $\Sa M$.
We show that new algebraic structure can appear in $\Sh M$.
Geometric stability theory contains dichotomies between combinatorial simplicity and algebraic structure.
Our construction suggests that if one seeks to obtain such dichotomies in the $\nip$ setting then one should look for the algebraic structure in the Shelah expansion.
If $\Sa M$ is stable then every externally definable set is definable, so this phenomenon cannot occur in the stable setting.
In \cite[Section 15]{big-nip} we described an $\nip$ expansion of an ordered abelian group which does not interpret an infinite field, but whose Shelah expansion does. 
The present example is similar.

\subsection{Acknowledgements} Thanks to Artem Chernikov for prompting us to think this through.

\section{Notation and Conventions}
\noindent Throughout $s,t$ are real numbers.
An open set in a topological space is \textbf{regular} if it is the interior of its closure.
A subset of an o-minimal structure is \textbf{independent} if it is independent in the sense of algebraic closure (equivalently: definable closure).
We let $\cl(X)$ be the closure in $\R^n$ of $X \subseteq \R^n$.

\section{The structure}

\noindent Let $\Sa R$ be an o-minimal expansion of $(\R,<,+)$, $\Sa R \prec \Sa N$ be highly saturated, $H$ be a dense independent subset of $N$, and $\Sa H$ be the structure induced on $H$ by $\Sa N$.
Dolich, Miller, and Steinhorn~\cite{DMS-Indepedent} study the expansion of an o-minimal structure by a unary predicate defining a dense independent set.
Fact~\ref{fact:independent} is \cite[2.16]{DMS-Indepedent}.

\begin{fact}
\label{fact:independent}
Any subset of $H^n$ definable in $(\Sa N,H)$ is of the form $X \cap H^n$ for some $\Sa N$-definable $X \subseteq N^n$.
It follows that the theory of $\Sa H$ is weakly o-minimal.
\end{fact}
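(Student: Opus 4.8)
The easy inclusion is immediate: if $X \subseteq N^n$ is $\Sa{N}$-definable then $X \cap H^n$ is definable in $(\Sa{N},H)$, since $H$ is a predicate of the pair. For the substantive inclusion the plan is to reduce to a type-transfer statement and then run an automorphism argument. Fix an $(\Sa{N},H)$-definable $Y \subseteq H^n$, with parameters $\bar c$ from $N$. Using saturation of $\Sa{N}$, a standard compactness argument shows that $Y = X \cap H^n$ for some $\Sa{N}$-definable $X$ over $\bar c$ as soon as $Y$ is invariant under $\Sa{N}$-elementary equivalence over $\bar c$ on $H^n$; so it suffices to prove the following statement $(\star)$: for all $\bar a,\bar b \in H^n$ and all finite $\bar c$ from $N$, if $\bar a$ and $\bar b$ realize the same $\Sa{N}$-type over $\bar c$, then they realize the same $(\Sa{N},H)$-type over $\bar c$.

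To prove $(\star)$ I would pass to a monster model $(\Sa{N}^\ast,H^\ast)$ of the pair and construct an automorphism of $(\Sa{N}^\ast,H^\ast)$ fixing $\bar c$ pointwise and sending $\bar a$ to $\bar b$, by a back-and-forth that extends the $\Sa{N}^\ast$-elementary map $\bar c\bar a \mapsto \bar c\bar b$ one element at a time while preserving the predicate $H^\ast$. At a typical stage I have a finite partial $\Sa{N}^\ast$-elementary map $g$ with $g(\dom(g) \cap H^\ast) = \mathrm{ran}(g) \cap H^\ast$, and I must extend it to a prescribed new element $e$. If $e \notin \dcl(\dom g)$, then its $\Sa{N}^\ast$-type over $\dom g$ is non-algebraic, so its realization set is an infinite convex subset of $N^\ast$ and therefore contains an open interval; since $H^\ast$ is dense it meets $H^\ast$, and since $H^\ast$ is independent and hence co-dense it also meets the complement of $H^\ast$, so I can pick an image of $e$ inside or outside $H^\ast$ as needed. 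The delicate case is $e \in \dcl(\dom g)$, where the image is forced by elementarity and I must check that $e \in H^\ast$ if and only if its forced image lies in $H^\ast$.

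The main obstacle is exactly this forced case: I need the $\Sa{N}^\ast$-elementary map $g$ to respect $H^\ast$ on definable closures, and here the independence of $H$ must be used in a genuinely strong form, because an external parameter in $\bar c$ can pull an $H^\ast$-element into the definable closure of $\bar c$ together with other $H^\ast$-elements. The fix is to carry a normal form as an invariant of the back-and-forth, taking $\dom g = \dcl(\bar c\,\bar h)$ for a tuple $\bar h$ from $H^\ast$ that is $\dcl$-independent over $\bar c$, and arranging that $H^\ast \cap \dcl(\bar c\,\bar h)$ is completely accounted for by the $\Sa{N}^\ast$-type over $\bar c$; then membership of a forced element in $H^\ast$ is determined by that type. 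Checking that this invariant survives both the forced and the generic extension steps is the technical heart of the argument, and is the content of \cite[2.16]{DMS-Indepedent}.

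Granting the trace description, weak o-minimality of $\mathrm{Th}(\Sa{H})$ follows quickly. The structure $\Sa{H}$ carries the definable linear order $\{(x,y)\in H^2 : x<y\}$, so convexity is meaningful in it. By the case $n=1$ of the first part, any $\Sa{H}$-definable subset of $H$ has the form $X \cap H$ for an $\Sa{N}$-definable $X \subseteq N$; o-minimality of $\Sa{N}$ writes $X$ as a finite union of points and open intervals, whose traces on $H$ are finitely many singletons and convex sets, so $X \cap H$ is a finite union of convex subsets of $H$. Since o-minimality of $\Sa{N}$ furnishes uniform bounds on the number of interval components across any definable family, these bounds transfer to definable families in $\Sa{H}$, and hence every model of $\mathrm{Th}(\Sa{H})$ has all one-variable definable sets equal to finite unions of convex sets. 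Thus $\mathrm{Th}(\Sa{H})$ is weakly o-minimal.
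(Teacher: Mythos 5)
You should first be aware that the paper itself does not prove this statement: Fact~\ref{fact:independent} is quoted directly as \cite[2.16]{DMS-Indepedent}, with no argument given. Your proposal ultimately does the same thing, since you defer ``the technical heart'' --- the preservation of your back-and-forth invariant through the forced and generic extension steps --- to that very citation. So what you have written is a strategy sketch whose essential content is outsourced to the result being proved. Your closing derivation of weak o-minimality from the trace description is correct: the trace description applied to the graph of a definable family, plus uniform finiteness in the o-minimal structure $\Sa N$, plus the fact that ``a union of at most $K$ convex sets'' is expressible by forbidding $2K+2$ alternations, does transfer to all models of the theory.

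The step you do carry out in detail, however, contains a genuine error: the reduction of the trace property to $(\star)$ is false. Invariance under $\Sa N$-type-equivalence computed in the actual pair $(\Sa N,H)$ --- even over all finite parameter sets, and even with $\Sa N$ highly saturated --- does not imply that a pair-definable $Y \subseteq H^n$ is a trace. Concretely, let $\Sa N$ be a monster real closed field, $H = \Q$, and $Y = \Z$. Every rational is $\emptyset$-definable in $\Sa N$, so distinct tuples from $\Q^n$ have distinct $\Sa N$-types over $\emptyset$ and your $(\star)$ holds vacuously; yet $\Z$ is definable in the pair $(\Sa N,\Q)$ (Julia Robinson's definition of $\Z$ in the field $\Q$, relativized to the predicate), and $\Z$ is not of the form $X \cap \Q$ for any $\Sa N$-definable $X$, since such an $X$ would be a finite union of points and intervals, and any interval containing two integers contains non-integer rationals. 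The flaw is in the compactness step: to separate the set of $\Sa N$-types realized in $Y$ from those realized in $H^n \setminus Y$ by a formula, you need these two sets of types to have disjoint \emph{closures} in the Stone space, and for that you need realizations in a saturated model of the \emph{pair} theory; saturation of $\Sa N$ alone is useless here because $Y$ is not $\Sa N$-definable. The statement you actually need is type-transfer in the monster pair: for $\bar a, \bar b \in (H^\ast)^n$ and small parameter sets from $N^\ast$, equality of $\Sa N^\ast$-types implies equality of pair-types. Granting that, the two type-sets become continuous images of compact sets, hence closed, a clopen set separates them, and elementarity pushes the resulting identity down to $(\Sa N,H)$. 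Your back-and-forth, if completed, would in fact prove this stronger statement, but then you must also verify that $H^\ast$ remains dense, codense, and independent in $N^\ast$ --- these are first-order schemas in the pair language, using uniform finiteness in o-minimal theories --- which you use silently throughout.
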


\noindent Fact~\ref{fact:independent} shows that $\Sa H$ is $\nip$, furthermore dp-minimal and distal.
Informally: $\Sa H$ should have the same combinatorial properties as $\Sa R$.

\begin{prop}
\label{prop:H-no-interpret}
$\Sa H$ does not interpret an infinite group.
\end{prop}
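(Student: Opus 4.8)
The plan is to reduce the whole statement to the single fact that the geometry of $\Sa H$ is \emph{trivial}, and then to invoke the principle — familiar in spirit from the (weakly) o-minimal trichotomy — that a linearly ordered structure whose algebraic closure is a degenerate pregeometry cannot interpret an infinite group. All the real algebraic content is concentrated in one lemma, whose proof is where the independence of $H$ does its work; the rest is dimension-theoretic bookkeeping plus one genuinely delicate point about imaginaries.

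The lemma I would prove is: for every finite $A \subseteq H$ one has $\operatorname{acl}_{\Sa H}(A) \cap H = A$; equivalently, every $\Sa H$-definable function $f \colon H^n \to H$ is, off a lower-dimensional set, a coordinate projection. For the first form, suppose $c \in \operatorname{acl}_{\Sa H}(A) \cap H$, witnessed by a finite $\Sa H$-definable set $S \subseteq H$ with $c \in S$. By Fact~\ref{fact:independent}, $S = \Phi \cap H$ for some $\Phi \subseteq N$ definable in $\Sa N$ over $A$. Since $H$ is dense and $\Phi \cap H = S$ is finite, $\Phi$ can contain no interval, so by o-minimality of $\Sa N$ the set $\Phi$ is itself finite; hence $\Phi \subseteq \operatorname{acl}_{\Sa N}(A) = \dcl_{\Sa N}(A)$, the last equality because $\Sa N$ is o-minimal. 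Now independence of $H$ enters: no element of $H$ lies in $\dcl_{\Sa N}$ of the others, so any $x \in \dcl_{\Sa N}(A) \cap H$ with $x \notin A$ would lie in $\dcl_{\Sa N}(H \setminus \{x\})$, a contradiction; thus $\dcl_{\Sa N}(A) \cap H \subseteq A$ and so $c \in A$. This shows the pregeometry $\operatorname{acl}_{\Sa H}$ restricted to $H$ is degenerate, and the projection statement for functions follows by running the same fibrewise computation over a generic $\bar a$.

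With triviality in hand, I would suppose toward a contradiction that $\Sa H$ interprets an infinite group $G$. Working in $\Sa H^{\mathrm{eq}}$ and using the additive dimension theory attached to the weakly o-minimal $\Sa H$, choose a small model and independent generics $a, b \in G$ over it, and put $c = a \cdot b$. Invariance of genericity under translation gives $\dim(a) = \dim(b) = \dim(c) = \dim G =: d \geq 1$, while $c \in \dcl(a,b)$, $b \in \dcl(a,c)$ and $a \in \dcl(b,c)$ make $a, b, c$ pairwise independent yet interalgebraic as a triple; in particular $c \in \operatorname{acl}(a,b) \setminus (\operatorname{acl}(a) \cup \operatorname{acl}(b))$. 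This is exactly a non-degenerate dependence, which a trivial geometry forbids.

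The main obstacle is that $a, b, c$ a priori live on imaginary sorts, whereas the lemma is a statement about the home sort $H$; bridging this gap is the real work. I would handle it by pushing the configuration down to $H$ — replacing $a, b, c$ by generic real representatives and checking that the dimension and (in)dependence relations survive up to $\operatorname{acl}$, so that the non-degenerate dependence reappears among tuples from $H$ and contradicts the lemma. Making this descent rigorous requires enough control of imaginaries in $\Sa H$: either a sufficient elimination-of-imaginaries statement for this distal, dp-minimal, weakly o-minimal theory, or the alternative route of first reducing to a one-dimensional group and realizing its operation as a definable binary operation $x \oplus y$ on an interval of $H$, where the lemma forces $x \oplus y \in \{x,y\}$ generically and no group law can meet that constraint. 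I expect this imaginary-handling step, rather than the triviality computation or the configuration bookkeeping, to be the crux.
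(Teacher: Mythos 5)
Your lemma (triviality of algebraic closure on $H$) and the subsequent group-configuration argument correspond to the paper's second and third steps, and they are in the right spirit. But the step you yourself flag as "the crux" --- getting from \emph{interprets} to \emph{defines} --- is genuinely missing, and it is not a detail that can be finessed by "pushing the configuration down to $H$." The paper closes this step at the outset with a citation: a theorem of Eleftheriou (Theorem C of \cite{Elef-small-sets}) shows that $\Sa H$ \emph{eliminates imaginaries}, after which it suffices to rule out definable infinite groups. Your proposed descent cannot be a generally valid move in this setting, and the paper itself is the witness: its main point is that $\Sh H$ --- a structure of exactly the same trace-of-an-o-minimal-structure kind, with the same degenerate algebraic closure on the home sort --- \emph{does} interpret an infinite group (indeed the field $\R$), and it does so precisely through a quotient $O/E$ by a convex equivalence relation. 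So no argument that treats imaginaries "softly" can succeed; any correct proof must use a property that separates $\Sa H$ from $\Sh H$, and that property is exactly elimination of imaginaries, which you neither prove nor cite. Your alternative route (reduce to a one-dimensional group whose operation lives on an interval of $H$) presupposes the same reduction from interpreted to definable groups, so it does not avoid the issue.

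A secondary caveat on your lemma: Fact~\ref{fact:independent} does not give a trace set $\Phi$ definable in $\Sa N$ \emph{over $A$}; it only gives some $\Sa N$-definable $\Phi$, whose parameters may lie anywhere in $N$. This matters here, because with the paper's convention that the primitives of the induced structure are traces of arbitrary parametrically definable sets, every singleton $\{c\}$ with $c \in H$ is itself a primitive relation, so $\dcl_{\Sa H}(\emptyset) = H$ and the literal statement $\operatorname{acl}_{\Sa H}(A) \cap H = A$ needs to be read modulo named constants (equivalently, formulated in a saturated model of the theory of $\Sa H$), with corresponding care about which parameters the trace representation uses. The paper is admittedly terse on the same point, so this is a shared gloss rather than a defect peculiar to your write-up; the decisive gap remains the missing elimination of imaginaries.
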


\begin{proof}
A theorem of Eleftheriou~\cite[Theorem C]{Elef-small-sets} shows that $\Sa H$ eliminates imaginaries, so it suffices to show that $\Sa H$ does not define an infinite group.
Fact~\ref{fact:independent} and the fact that $H$ is independent together show that the algebraic closure in $\Sa H$ of any $A \subseteq H$ is $A$.
It is now routine to show that $\Sa H$ does not define an infinite group.
\end{proof}

\noindent Proposition~\ref{prop:H-interpret} below shows that $\Sh H$ interprets $(\R,<,+)$ in general, and interprets $(\R,<,+,\cdot)$ when $\Sa R$ expands $(\R,<,+,\cdot)$.
To obtain interpretability of $\Sa R$ we will need Lemma~\ref{lem:regular}.
It is easier to show that $(\R,<,+)$ or $(\R,<,+,\cdot)$ is interpretable, as 
$$ \{ (s,s',t) \in \R^3 : s + s' < t\} \quad \text{and} \quad \{ (s,s',t) \in \R^3 : ss' < t \}  $$
are both regular open.

\begin{lemma}
\label{lem:regular}
Every $\Sa R$-definable set is a boolean combination of regular open $\Sa R$-definable sets.
\end{lemma}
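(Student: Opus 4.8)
The plan is to use o-minimal cell decomposition to reduce to a single cell, and then to extract one clean topological fact about open cells. Since the class of Boolean combinations of regular open definable sets is closed under finite unions, and every $\Sa{R}$-definable set is a finite union of cells, it suffices to show that each cell is a Boolean combination of regular open definable sets. Fix a cell $C\subseteq\R^n$ of dimension $d$. By the standard structure theory, after permuting coordinates $C$ is the graph of a continuous definable map $f=(f_1,\dots,f_{n-d})\colon D\to\R^{n-d}$ over an open cell $D\subseteq\R^d$; that is, $C=\{(\bar u,f(\bar u)):\bar u\in D\}$ in coordinates $(\bar u,\bar y)\in\R^d\times\R^{n-d}$. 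Writing $\Omega:=D\times\R^{n-d}$ for the cylinder over $D$ and, for each $j$,
\[
 A_j^{+}:=\{(\bar u,\bar y)\in\Omega : y_j>f_j(\bar u)\},\qquad A_j^{-}:=\{(\bar u,\bar y)\in\Omega : y_j<f_j(\bar u)\},
\]
one checks directly that a point of $\Omega$ lies in some $A_j^{\pm}$ exactly when some coordinate $y_j$ differs from $f_j(\bar u)$, so that
\[
 C=\Omega\setminus\bigcup_{j=1}^{\,n-d}\bigl(A_j^{+}\cup A_j^{-}\bigr),
\]
with the union empty (and $C=\Omega=D$) in the top-dimensional case $d=n$.

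The point of this presentation is that $\Omega$ and each $A_j^{\pm}$ is itself an \emph{open cell}. Indeed, $\Omega$ is the open cell $D$ with the extra coordinates ranging over $(-\infty,+\infty)$, while $A_j^{+}$ is presented as a cell by ordering the coordinates as $\bar u$ (ranging over the open cell $D$), then the coordinates $y_{j'}$ with $j'\neq j$ (each a band $(-\infty,+\infty)$), and finally $y_j$ as the band $(f_j(\bar u),+\infty)$; all bounds are continuous in the earlier coordinates because $f_j$ is continuous on $D$, and similarly for $A_j^{-}$. Thus the entire lemma reduces to the single assertion that every open cell is regular open, since then the displayed identity exhibits $C$ as a Boolean combination of regular open definable sets, and everything else is formal manipulation in the Boolean algebra generated by these sets.

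The key step, and the only nonformal one, is therefore to prove that an open cell $D$ satisfies $D=\Int(\cl(D))$; I would do this by induction on $\dim D$ using the band presentation of open cells. For $\dim D=1$ an open cell is an open interval, which is regular open. In general $D$ is a band $\{(\bar v,u):\bar v\in D',\,g(\bar v)<u<h(\bar v)\}$ over an open cell $D'$, with $g<h$ continuous definable (allowing $\pm\infty$). If $(\bar v_0,u_0)\in\Int(\cl(D))$, then a neighbourhood of it lies in $\cl(D)\subseteq\cl(D')\times\R$, so its projection forces $\bar v_0\in\Int(\cl(D'))=D'$ by the inductive hypothesis; and once $\bar v_0\in D'$, continuity of $g$ and $h$ at $\bar v_0$ shows that a point just above $h(\bar v_0)$ or just below $g(\bar v_0)$ escapes $\cl(D)$, so $g(\bar v_0)<u_0<h(\bar v_0)$ and hence $(\bar v_0,u_0)\in D$. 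This gives $\Int(\cl(D))\subseteq D$, and the reverse inclusion is automatic since $D$ is open. I expect this regularity of open cells to be the heart of the argument; the reduction to a single cell and the bookkeeping of the Boolean identity above are routine consequences of cell decomposition.
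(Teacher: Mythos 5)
Your proof is correct, and it takes a genuinely different route from the paper's. The paper reduces, via cell decomposition, to showing that every definable \emph{open} set is a finite union of regular open definable sets, and there it hits the real difficulty it flags explicitly: a definable open set in an o-minimal expansion of $(\R,<,+)$ need not be a finite union of open cells. It resolves this with a case split, invoking Edmundo--Eleftheriou--Prelli when $\Sa R$ defines no global field structure, and Wilkie's theorem on bounded definable open sets together with the inversion trick $\iota$ when it does. You sidestep that difficulty entirely: rather than decomposing open sets into open cells, you decompose an arbitrary definable set into cells and write each individual cell $C$ --- presented, after a coordinate permutation, as the graph of a continuous definable map over an open cell $D$ --- as the cylinder $D\times\R^{n-d}$ minus the regions strictly above and strictly below the graph in each fiber coordinate; each of these pieces is an open cell in the permuted coordinate order, hence regular open by your induction, and since regularity and definability are preserved under the permutation homeomorphism, $C$ is a Boolean combination of regular open definable sets (a point worth one explicit sentence in a final write-up). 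Your induction that open cells are regular is itself correct and fills in something the paper merely asserts (``Note that an open cell is regular''). As for what each approach buys: yours is elementary, uniform in $\Sa R$ (no field/non-field dichotomy), and independent of the covering theorems of Wilkie and Edmundo--Eleftheriou--Prelli; the paper's argument yields the stronger conclusion that definable open sets are finite \emph{unions} of regular open definable sets, but that extra strength is never used --- Proposition~\ref{prop:H-interpret} only needs the Boolean-combination statement --- so your argument would suffice for the paper's purposes.
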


\noindent The proof is complicated by the fact that a definable open set need not be a union of finitely many open cells.
We say that $\Sa R$ defines a global field structure if there are definable $\oplus,\otimes: \R^2 \to \R$ such that $(\R,<,\oplus,\otimes)$ is isomorphic to $(\R,<,+,\cdot)$.
If $\Sa R$ defines a global field structure then the topological study of $\Sa R$-definable sets entirely reduces to the study of definable sets in o-minimal expansions of $(\R,<,+,\cdot)$.

\begin{proof}
An application of o-minimal cell decomposition shows that every $\Sa R$-definable set is a boolean combination of open $\Sa R$-definable sets, so it suffices to suppose $U \subseteq \R^n$ is open and $\Sa R$-definable and show that $U$ is a finite union of regular open $\Sa R$-definable sets.
Note that an open cell is regular. \newline

\noindent Edmundo, Eleftheriou, and Prelli~\cite{EEL-covering} show that if $\Sa R$ does not define a global field structure then $U$ is a finite union of open cells.
Suppose $\Sa R$ defines a global field structure.
Without loss of generality we suppose $\Sa R$ expands $(\R,<,+,\cdot)$.
Let $B_t$ be the open ball in $\R^n$ with center the origin and radius $t > 0$.
It suffices to show that $U \cap B_2$ and $U \setminus \cl(B_1)$ are both finite unions of definable regular open sets.
Let $\iota : \R^n \setminus \{0\} \to \R^n \setminus \{0\}$ be the definable homeomorphism given by
$$ \iota(t_1,\ldots,t_n) = (t^{-1}_1,\ldots,t^{-1}_n). $$
Wilkie~\cite{Wilkie-covering} shows that any bounded definable open set is a finite union of open cells.
So $U \cap B_2$ is a finite union of open cells.
Furthermore $\iota(U \setminus \cl(B_1)) \subseteq B_1$ is a union of open cells $V_1,\ldots,V_m$.
So $U \setminus \cl(B_1)$ is the union of the $\iota(V_k)$.
As each $V_k$ is regular open it follows that each $\iota(V_k)$ is regular open.
So $U \setminus \cl(B_1)$ is a finite union of definable regular open sets.
\end{proof}

\begin{prop}
\label{prop:H-interpret}
$\Sh H$ interprets $\Sa R$.
\end{prop}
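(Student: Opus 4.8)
The plan is to transport the structure of $\Sa{R}$ across the trace map $X \mapsto X \cap H^n$ sending an $\Sa{N}$-definable set to its intersection with $H$. First I would record the description of the definable sets of $\Sh{H}$: combining Fact~\ref{fact:shelah} with Fact~\ref{fact:independent} and saturation, a subset of $H^n$ is $\Sh{H}$-definable if and only if it is of the form $X \cap H^n$ for some $\Sa{N}$-definable $X \subseteq N^n$, now allowing parameters anywhere in $N$ rather than only in $H$. In particular every cut $c_a := \{h \in H : h < a\}$ with $a \in N$, and the trace of every regular open definable set, is available in $\Sh{H}$.

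The engine of the argument is that density makes the trace map faithful on regular open sets: if $X \subseteq N^n$ is regular open and $\Sa{N}$-definable then $X = \Int(\cl(X \cap H^n))$, since $H^n$ is dense in $N^n$, so $X$ is recovered from $X \cap H^n$. From this I would extract two facts. For an open box $B = \prod_i (h_i^-,h_i^+)$ with endpoints in $H$ and a regular open definable $X$, one has $B \subseteq X$ iff $B \cap H^n \subseteq X \cap H^n$ (the box is open, so $B \subseteq \cl(B \cap H^n) \subseteq \cl(X)$ forces $B \subseteq \Int\cl(X) = X$). Consequently membership of a point is detected by nearby $H$-points: for $\bar a \in N^n$, $\bar a \in X$ iff there is such a box with $\bar a \in B$ and $B \cap H^n \subseteq X \cap H^n$. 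Both conditions are expressible in $\Sh{H}$ once $\bar a$ is presented through its cuts $c_{a_1},\dots,c_{a_n}$, using only the order and the trace $X \cap H^n$.

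Next I would set up the interpreted copy of $\Sa{R}$. Let $F := \dcl_{\Sa{N}}(H)$; since o-minimal theories have definable Skolem functions, $F \preceq \Sa{N}$, so $F$ is a model of $\mathrm{Th}(\Sa{R})$, is closed under every definable operation, and is dense in $N$. The domain of the interpretation is the family of cuts $\{c_a : a \in F\}$, with $a = f(\bar h)$ coded by $\bar h \in H^k$ for a definable $f$; the order is $c_a \subseteq c_b$ and two codes name the same element iff the cuts coincide, both trace relations. The ordered group, resp.\ field, structure comes from the distinguished regular open sets: unwinding $a + b < c$ by density gives
$$ a + b < c \iff \exists h_1,h_2,h_3 \in H\ \bigl( h_1 \notin c_a \wedge h_2 \notin c_b \wedge (h_1,h_2,h_3) \in A \wedge h_3 \in c_c \bigr), $$
where $A$ is the trace of the regular open set $\{x+y<z\}$, and multiplication is handled identically via the trace of $\{xy<z\}$ when $\Sa{R}$ expands $(\R,<,+,\cdot)$. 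This already interprets $(\R,<,+)$, resp.\ $(\R,<,+,\cdot)$.

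Finally I would promote this to all of $\Sa{R}$ using Lemma~\ref{lem:regular}: an arbitrary $\Sa{R}$-definable $S \subseteq N^n$ is a boolean combination of regular open definable sets $U_1,\dots,U_m$, and by the membership criterion each $\bar a \in U_i$ (for $\bar a \in F^n$ presented by cuts) is an $\Sh{H}$-formula, so the same boolean combination defines $\bar a \in S$ on the interpreted domain. Hence every relation of $\Sa{R}$ is induced on the copy and $\Sh{H}$ interprets $\Sa{R}$. The main obstacle is the domain itself: checking that the family of cuts $\{c_a : a \in F\}$ with these operations is a genuine definable quotient of $H$-tuples. Closure of the operations is inherited from $F = \dcl_{\Sa{N}}(H)$, but presenting the cuts through a single (possibly $\bigvee$-)definable family and verifying that the quotient and its operations are definable is the delicate point; the faithfulness of the trace on regular open sets is what forces these to be the right objects, and is precisely why Lemma~\ref{lem:regular} is needed to pass from the single relations $+$ and $\cdot$ to all of $\Sa{R}$.
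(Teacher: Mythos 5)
Your opening moves are sound and in fact parallel the paper's: the identification of $\Sh H$-definable sets with traces of $\Sa N$-definable sets, the appeal to Lemma~\ref{lem:regular}, and the observation that density of $H$ lets one recover a regular open set from its trace are all correct, and the last of these is exactly the mechanism the paper uses at the end of its own proof. The gap is the one you flag yourself and never close: the domain of your interpretation is not definable. An interpretation requires a definable subset of $H^k$ for a single fixed $k$ together with a definable equivalence relation on it, whereas your codes range over all $\Sa N$-definable functions $f$ of all arities, so the family of cuts $\{c_a : a \in \dcl_{\Sa N}(H)\}$ is at best $\bigvee$-definable. Nothing in the proposal reduces this to one definable family; calling the point ``delicate'' is not a proof, and this point is the entire content of the proposition. (A minor slip along the way: definable Skolem functions do not follow from o-minimality alone; you need that $\Sa R$ expands an ordered group, which does hold here.)

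Moreover, even if the domain problem were repaired, you would have interpreted the wrong structure. Distinct $a,b \in F := \dcl_{\Sa N}(H)$ have distinct cuts, since density of $H$ in $N$ puts an element of $H$ strictly between them; so your quotient, with equality given by coincidence of cuts, is in bijection with $F$ itself. But $H$, being dense in the highly saturated $N$, contains infinite and infinitesimal elements, so $F$ is a nonarchimedean model elementarily equivalent to $\Sa R$ and not isomorphic to it ($\Sa R$ has archimedean, Dedekind complete order). Your argument would therefore show at most that $\Sh H$ interprets \emph{some} model of the theory of $\Sa R$ --- enough for the headline claim about interpreting an infinite field when $\Sa R$ defines one, but not Proposition~\ref{prop:H-interpret} as stated. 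The paper's proof supplies precisely the idea your construction is missing: take as domain the single convex, hence externally definable, set $O$ of finite elements of $H$, and quotient not by equality of cuts but by the coarser externally definable relation $E$ of infinitesimal closeness. Then $O/E$ is literally $\R$ via the standard part map, and your trace-faithfulness argument, in the paper's form $\cl(U) = \st(U' \cap O^n)$ together with $U = \Int(\cl(U))$, recovers every regular open $\Sa R$-definable set, whence all of $\Sa R$ by Lemma~\ref{lem:regular}.
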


\noindent We will need to apply the easy fact that if $\Sa M$ is an $\nip$ expansion of a linear order then every convex subset of $M$ is externally definable.

\begin{proof}
Let $O$ be the set of $a \in H$ such that $|a| < t$ for some $t > 0$.
Let $Q$ be the set of $a \in H$ such that $s < a < t$ for some $s,t > 0$.
Then $O$ and $Q$ are both convex, hence definable in $\Sh H$. \newline

\noindent Let $E$ be the equivalence relation on $H$ where $(a,b) \in E$ when $|a - b| < t$ for all $t > 0$.
We show that $E$ is definable in $\Sh H$.
Let $X$ be the set of $(a,b,c) \in N^3$ such that $| a - b | < c$.
Then $C := X \cap H^3$ is definable in $\Sa H$.
Observe that
$$ E = \bigcap_{c \in Q} \{ (a,b) \in H^2 : (a,b,c) \in C \} $$
so $E$ is definable in $\Sh H$. \newline

\noindent Each $E$-class is convex so we put a $\Sh H$-definable linear order on $H/E$ by declaring the class of $a$ to be less than the class of $b$ when $a < b$.
Observe that the $E$-class of any element of $O$ contains a unique real number and every real number is contained in the $E$-class of some element of $O$.
We therefore identify $O/E$ with $\R$, observe that the $\Sh H$-definable ordering on $O/E$ agrees with the usual order on $\R$, and let $\st : O \to \R$ be the quotient map.
As $\Sh H$ defines the usual order on $\R$, it defines a basis for the topology on $\R^n$.
We show that $\Sa R$ is a reduct of the structure induced on $\R$ by $\Sh H$.
By Lemma~\ref{lem:regular} it suffices to suppose that $U \subseteq \R^n$ is regular, open, and $\Sa R$-definable and show that $U$ is definable in $\Sh H$.
Let $U'$ be the subset of $N^n$ defined by any formula defining $U$.
It is easy to see that $\cl(U) = \st(U' \cap O^n)$, so $\cl(U)$ is definable in $\Sh H$.
So $U$ is $\Sh H$-definable as $U$ is the interior of $\cl(U)$ and $\Sh H$ defines a basis for $\R^n$.
\end{proof}

\noindent The proof of Proposition~\ref{prop:H-interpret} shows that $\Sa R$ is interpretable in the expansion of $\Sa H$ by two convex sets, $O$ and $Q$.

\bibliographystyle{abbrv}
\bibliography{H}
\end{document}